\numberwithin{equation}{section}
\theoremstyle{plain}
\newtheorem{theorem}{Theorem}[section]
\newtheorem{lemma}[theorem]{Lemma}
\newtheorem{conjecture}[theorem]{Conjecture}
\theoremstyle{definition}
\newtheorem{definition}[theorem]{Definition}
\newtheorem{example}[theorem]{Example}
\newcommand{\cmp}{\mathbb{C}}
\newcommand{\real}{\mathbb{R}}
\newcommand{\re}{\text{Re}}
\newcommand{\ovr}{\overline}
\title{Exploiting algebraic structure in global optimization and the Belgian chocolate problem}
\author[1]{Zachary Charles}
\author[2]{Nigel Boston}
\affil[1]{Department of Mathematics, University of Wisconsin-Madison}
\affil[2]{Department of Mathematics and Department of Electrical and Computer Engineering, University of Wisconsin-Madison}
\begin{document}

\maketitle

\begin{abstract}
The Belgian chocolate problem involves maximizing a parameter $\delta$ over a non-convex region of polynomials. In this paper we detail a global optimization method for this problem that outperforms previous such methods by exploiting underlying algebraic structure. Previous work has focused on iterative methods that, due to the complicated non-convex feasible region, may require many iterations or result in non-optimal $\delta$. By contrast, our method locates the largest known value of $\delta$ in a non-iterative manner. We do this by using the algebraic structure to go directly to large limiting values, reducing the problem to a simpler combinatorial optimization problem. While these limiting values are not necessarily feasible, we give an explicit algorithm for arbitrarily approximating them by feasible $\delta$. 
Using this approach, we find the largest known value of $\delta$ to date, $\delta = 0.9808348$. 
We also demonstrate that in low degree settings, our method recovers previously known upper bounds on $\delta$ and that prior methods converge towards the $\delta$ we find.
\end{abstract}

\section{Introduction}\label{sec:intro}

	Global optimization problems of practical interest can often be cast as optimization programs over non-convex feasible regions. Unfortunately, iterative optimization over such regions may require large numbers of iterations and result in non-global maxima. Finding all or even many critical points of such programs is generally an arduous, computationally expensive task. 
	In this paper we show that by exploiting the underlying algebraic structure, we can directly find the largest known values of the Belgian chocolate problem, a famous open problem bridging optimization and control theory. Moreover, this algebraic method does not require any iterative approach. Instead of relying on eventual convergence, our method algebraically identifies points that provide the largest value of the Belgian chocolate problem so far.

	While this approach may seem foreign to the reader, we will show that our algebraic optimization method outperforms prior global optimization methods for solving the Belgian chocolate problem. We will contrast our method with the optimization method of Chang and Sahinidis \cite{chang2007global} in particular. Their method used iterative branch-and-reduce techniques \cite{ryoo1996branch} to find what was the largest known value of $\delta$ until our new approach. Due to the complicated feasible region, their method may take huge numbers of iterations or converge to suboptimal points. Our method eliminates the need for these expensive iterative computations by locating and jumping directly to the larger values of $\delta$. This approach has two primary benefits over \cite{chang2007global}. First, it allows us to more efficiently find $\delta$ as we can bypass the expensive iterative computations. This also allows us to extend our approach to cases that were not computationally tractable for \cite{chang2007global}. Second, our approach allows us to produce larger values of $\delta$ by finding a finite set of structured limit points. In low-degree cases, this set provably contains the supremum of the problem, while in higher degree cases, the set contains larger values of $\delta$ than found in \cite{chang2007global}.

	The Belgian chocolate problem is a famous open problem in control theory proposed by Blondel in 1994. In the language of control theory, Blondel wanted to determine the largest value of a process parameter for which stabilization of an unstable plant could be achieved by a stable minimum-phase controller \cite{blondel1994simultaneous}. Blondel designed the plant to be a low-degree system that was resistant to known stabilization methods, in the hope that a solution would lead to development of new stabilization techniques. Specifically, Blondel wanted to determine the largest value of $\delta > 0$ for which the transfer function $P(s) = (s^2-1)/(s^2-2\delta s+1)$ can be stabilized by a proper, bistable controller.

	For readers unfamiliar with control theory, this problem can be stated in simple algebraic terms. To do so, we will require the notion of a {\it stable} polynomial. A polynomial is stable if all its roots have negative real part. The Belgian chocolate problem is then as follows.\\
	\\
	\noindent{\bf Belgian chocolate problem:} Determine for which $\delta > 0$ there exist real, stable polynomials $x(s), y(s), z(s)$ with $\deg (x) \geq \deg (y)$ satisfying 
	\begin{equation}\label{bcp}
	z(s) = (s^2-2\delta s+1)x(s)+(s^2-1)y(s).\end{equation}
	We call such $\delta$ {\it admissible}. In general, stability of $x,y,z$ becomes harder to achieve the larger $\delta$ is. Therefore, we are primarily interested in the supremum of all admissible $\delta$. If we fix a maximum degree $n$ for $x$ and $y$, then this gives us the following global optimization problem for each $n$.\\
	\\
	\noindent{\bf Belgian chocolate problem} (optimization version):
	\begin{equation}\label{bcp_opt}
	\begin{aligned}
	& \underset{\delta, x(s), y(s)}{\text{maximize}}
	& & \delta \\
	& \text{subject to} & & x, y, z\text{ are stable},\\
	& & & z(s) = (s^2-2\delta s+1)x(s)+(s^2-1)y(s),\\
	& & & \deg(y) \leq \deg(x) \leq n.
	\end{aligned}
	\end{equation}
	Note that we can view a degree $n$ polynomial with real coefficients as a $(n+1)$-dimensional real vector of its coefficients. Under this viewpoint, the space of polynomials $x, y, z$ that are stable and satisfy (\ref{bcp}) is an extremely complicated non-convex space. As a result, it is difficult to employ global optimization methods directly to this problem. The formulation above does suggest an undercurrent of algebra in this problem. This will be exploited to transform the problem into a combinatorial optimization problem by finding points that are essentially local optima.

	Previous work has employed various optimization methods to find even larger admissible $\delta$. 
	Patel et al.~\cite{patel2002some} were the first to show that $\delta = 0.9$ is admissible by $x, y$ of degree at most 11, answering a long-standing question of Blondel.
	They further showed that $\delta = 0.93720712277$ is admissible. In 2005, Burke et al. \cite{burke2005analysis} showed that $\delta = 0.9$ is admissible with $x,y$ of degree at most 3. They also improved the record to $\delta = 0.94375$ using gradient sampling techniques. In 2007, Chang and Sahinidis used branch-and-reduce techniques to find admissible $\delta$ as large as $0.973974$ \cite{chang2007global}. In 2012, Boston used algebraic techniques to give examples of admissible $\delta$ up to 0.97646152 \cite{boston2012belgian}. Boston found polynomials that are almost stable and satisfy (\ref{bcp}). Boston then used ad hoc methods to perturb these to find stable $x,y,z$ satisfying (\ref{bcp}). While effective, no systematic method for perturbing these polynomials to find stable ones was given.

	In this paper, we extend the approach used by Boston in 2012 \cite{boston2012belgian} to achieve the largest known value of $\delta$ so far. We will refer to this method as the method of {\it algebraic specification}. We show that these almost stable polynomials serve as limiting values of the optimization program. Empirically, these almost stable polynomials achieve the supremum over all feasible $\delta$. Furthermore, we give a theoretically rigorous method for perturbing the almost stable polynomials produced by algebraic specification to obtain stable polynomials. Our approach shows that all $\delta \leq 0.9808348$ are admissible. This gives the largest known admissible value of $\delta$ to date. We further show that previous global optimization methods are tending towards the limiting values of $\delta$ found via our optimization method.

	We do not assume any familiarity on the reader's part with the algebra and control theory and will introduce all relevant notions. While we focus on the Belgian chocolate problem throughout the paper, we emphasize that the general theme of this paper concerns the underlying optimization program. We aim to illustrate that by considering the algebraic structure contained within an optimization problem, we can develop better global optimization methods.

\section{Motivation for our approach}\label{motivation}\label{sec:motivation}

	In order to explain our approach, we will discuss previous approaches to the Belgian chocolate problem in more detail. Such approaches typically perform iterative non-convex optimization in the space of stable controllers in order to maximize $\delta$. In \cite{chang2007global}, Chang and Sahinidis formulated, for each $n$, a non-convex optimization program that sought to maximize $\delta$ subject to the polynomials $x, y, (s^2-2\delta s +1)x+(s^2-1)y$ being stable and such that $n \geq \deg(x) \geq \deg(y)$. For notational convenience, we will always define $z = (s^2-2\delta s + 1)x+(s^2-1)y$. Chang and Sahinidis used branch-and-reduce techniques to attack this problem for $n$ up to 10.

	Examining the roots of the $x,y,z$ they found for $\deg(x) = 6,8,10$, a pattern emerges. Almost all the roots of these polynomials are close to the imaginary axis and are close to a few other roots. In fact, most of these roots have real part in the interval $(-0.01,0)$. In other words, the $x,y,z$ are approximated by polynomials with many repeated roots on the imaginary axis. It is also worth noting that the only roots of $x$ that were omitted are very close to $-\delta \pm \sqrt{\delta^2-1}$. This suggests that $x$ should have a factor close to $(s^2+2\delta s+1)$.

	This suggests the following approach. Instead of using non-convex optimization to iteratively push $x,y,z$ towards polynomials possessing repeated roots on the imaginary axis, we will algebraically construct polynomials with this property. This will allow us to immediately find large limit points of the optimization problem in (\ref{bcp_opt}). While the $x,y,z$ we construct are not stable, they are close to being stable. We will show later that we can perturb $x,y,z$ and thereby push their roots just to the left of the imaginary axis, causing them to be stable. This occurs at the expense of decreasing $\delta$ by an arbitrarily small amount.

	Our method only requires examining finitely many such limit points. Moreover, for reasonable degrees of $x$ and $y$, these limit points can be found relatively efficiently. By simply checking each of these limit points, we reduce to a combinatorial optimization problem. This combinatorial optimization problem provably achieves the supremal values of $\delta$ for $\deg(x) \leq 4$. For higher degree $x$, our method finds larger values of $\delta$ than any previous optimization method thus far. In the sections below we will further explain and motivate our approach, and show how this leads to the largest admissible $\delta$ found up to this point.

\section{Main results}\label{sec:math_back}

	\subsection{Preliminaries}

	Given $t \in \cmp$, we let $\re(t)$ denote its real part. We will let $\real[s]$ denote the set of polynomials in $s$ with real coefficients. For $p(s) \in \real[s]$, we call $p(s)$ {\it stable} if every root $t$ of $p$ satisfies $\re(t) < 0$. We let $H$ denote the set of all stable polynomials in $\real[s]$. We call $p(s)$ {\it quasi-stable} if every root $t$ of $p$ satisfies $\re(t) \leq 0$. We let $\overline{H}$ denote the set of quasi-stable polynomials of $\real[s]$. We let $H^m, \overline{H^m}$ denote the sets of stable and quasi-stable polynomials respectively of degree at most $m$.

	\begin{definition}We call $\delta$ {\it admissible} if there exist $x,y \in H$ such that $\deg(x) \geq \deg(y)$ and
	\begin{equation}
	(s^2-2\delta s+1)x(s) + (s^2-1)y(s) \in H.\end{equation}\end{definition}

	\begin{definition}We call $\delta$ {\it quasi-admissible} if there exist $x,y \in \ovr{H}$ such that $\deg(x) \geq \deg(y)$ and
	\begin{equation}
	(s^2-2\delta s+1)x(s) + (s^2-1)y(s) \in \ovr{H}.\end{equation}\end{definition}

	Note that since quasi-stability is weaker than stability, quasi-admissibility is weaker than admissibility. Our main theorem (Theorem \ref{main_thm} below) will show that if $\delta$ is quasi-admissible, then all smaller $\delta$ are admissible. Note that this implies that the Belgian chocolate problem is equivalent to finding the supremum of all admissible $\delta$. We will then find quasi-admissible $\delta$ in order to establish which $\delta$ are admissible. This is the core of our approach. These quasi-admissible $\delta$ are easily identified and are limit points of admissible $\delta$.

	In practice, one verifies stability by using the Routh-Hurwitz criteria. Suppose we have a polynomial $p(s) = a_0s^n + a_1s^{n-1} + \ldots + a_{n-1}s + a_n \in \real[s]$ such that $a_0 > 0$. Then we define the $n\times n$ {\it Hurwitz matrix} $A(p)$ as

	$$A(p) = \begin{pmatrix}
	a_1 & a_3 & a_5 & \ldots & \ldots & 0 & 0\\
	a_0 & a_2 & a_6 & \ldots & \ldots & 0 & 0\\
	0 & a_1 & a_3 & \ldots & \ldots & 0 & 0\\
	0 & a_0 & a_2 & \ldots & \ldots & 0 & 0\\
	\vdots & \vdots & \vdots & \ddots & \ddots & \vdots & \vdots\\
	0 & 0 & 0 & \ldots & \ldots & a_{n-2} & a_n\end{pmatrix}.$$

	Adolf Hurwitz showed that a real polynomial $p$ with positive leading coefficient is stable if and only if all leading principal minors of $A(p)$ are positive.
	While it may seem natural to conjecture that $p$ is quasi-stable if and only if all leading principal minors are nonnegative, this only works in one direction.

	\begin{lemma}Suppose $p$ is a real polynomial with positive leading coefficient. If $p$ is quasi-stable then all the leading principal minors of $A(p)$ are nonnegative.\end{lemma}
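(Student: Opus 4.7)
The plan is to prove this by a continuity/perturbation argument, leveraging the Routh--Hurwitz criterion (already stated) in its strict form for stable polynomials.

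First I would construct a one-parameter family of stable polynomials approximating $p$. The natural choice is $p_\epsilon(s) := p(s+\epsilon)$ for $\epsilon > 0$. If $t$ is a root of $p$, then $t - \epsilon$ is a root of $p_\epsilon$, so $\re(t-\epsilon) = \re(t) - \epsilon \leq -\epsilon < 0$ by quasi-stability of $p$. Hence $p_\epsilon$ is stable. Moreover, $p_\epsilon$ has the same degree and the same positive leading coefficient as $p$, so the Hurwitz matrix $A(p_\epsilon)$ is well-defined.

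Next, the coefficients of $p_\epsilon$ are polynomials in $\epsilon$ (by expanding $(s+\epsilon)^k$), so each entry of $A(p_\epsilon)$ is a polynomial, hence continuous, function of $\epsilon$. Since the leading principal minors are polynomial expressions in the entries of $A(p_\epsilon)$, each minor is also a continuous function of $\epsilon$. Applying the Routh--Hurwitz criterion to $p_\epsilon$ (which is stable with positive leading coefficient), every leading principal minor of $A(p_\epsilon)$ is strictly positive for each $\epsilon > 0$. Letting $\epsilon \to 0^+$ and invoking continuity, each leading principal minor of $A(p) = \lim_{\epsilon \to 0^+} A(p_\epsilon)$ is a limit of positive quantities, hence nonnegative.

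There is essentially no hard step here; the only thing to be careful about is that the shift $s \mapsto s+\epsilon$ preserves the leading coefficient and degree (so that $A(p)$ and $A(p_\epsilon)$ have the same shape and the limit statement makes sense), and that the Routh--Hurwitz hypothesis ``positive leading coefficient'' is inherited by $p_\epsilon$. Both follow immediately since the top-degree coefficient is unchanged by a horizontal translation. The one-sided nature of the conclusion (nonnegativity rather than positivity) is exactly what one expects from a limit of strict inequalities, and this also clarifies why the converse fails: a continuous limit of positive minors can vanish without the underlying polynomial being quasi-stable.
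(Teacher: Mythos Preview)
Your argument is correct and is essentially identical to the paper's own proof: shift to $p_\epsilon(s)=p(s+\epsilon)$, apply Routh--Hurwitz to the resulting stable polynomial, and pass to the limit using continuity of the minors in the coefficients. Your additional remarks about the leading coefficient and degree being preserved are accurate and make the argument slightly more explicit than the paper's version.
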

	\begin{proof}If $p(s)$ is quasi-stable, then for all $\epsilon > 0$, $p(s+\epsilon)$ is stable. Therefore, for all $\epsilon > 0$, the leading minors of $A(p(s+\epsilon))$ are all positive. Note that
	$$\lim_{\epsilon \to 0} A(p(s+\epsilon)) = A(p).$$

	Since the minors of a matrix are expressible as polynomial functions of the entries of the matrix, the leading principal minors of $A$ are limits of positive real numbers. They are therefore nonnegative.\end{proof}

	To see that the converse doesn't hold, consider $p(s) = s^4 + 198s^2 + 101^2$. Its Hurwitz matrix has nonnegative leading principal minors, but $p$ is not quasi-stable. This example, as well as a more complete characterization of quasi-stability given below, can be found in \cite{asner1970total}. In particular, it is shown in \cite{asner1970total} that a real polynomial $p$ with positive leading coefficient is quasi-stable if and only if for all $\epsilon > 0$, $A(p(s+\epsilon))$ has positive leading principal minors.

	\subsection{Quasi-admissible and admissible $\delta$}\label{delta_theory}

	We first present the following theorem concerning which $\delta$ are admissible. We will defer the proof until later as it is a simple corollary to a stronger theorem about approximating polynomials in $\overline{H}$ by polynomials in $H$.

	\begin{theorem}\label{delta_prop}If $\delta$ is admissible then all $\hat{\delta} < \delta$ are also admissible.\end{theorem}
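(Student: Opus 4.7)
The plan is to reduce this theorem to a stronger approximation statement, as the author foreshadows: one that goes from witnesses in $\overline{H}$ to witnesses in $H$ at the cost of slightly decreasing $\delta$. Since $H \subseteq \overline{H}$, any witnesses $x, y \in H$ with $z \in H$ for admissibility of $\delta$ are automatically in $\overline{H}$. Hence admissible $\delta$ is always quasi-admissible, and it suffices to prove the stronger claim: if $\delta$ is quasi-admissible, then every $\hat{\delta} < \delta$ is admissible.

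To produce stable witnesses $(\hat{x}, \hat{y})$ for $\hat{\delta}$ out of quasi-stable witnesses $(x, y)$ for $\delta$, the natural first move is the leftward shift $s \mapsto s + \epsilon$ for $\epsilon > 0$. This sends quasi-stable $x, y$ to strictly stable $x(s+\epsilon), y(s+\epsilon)$, since roots on the imaginary axis are pushed to $\re = -\epsilon$. The substitution does not preserve the algebraic identity, however: the factor $(s^2 - 2\delta s + 1)$ becomes $s^2 - 2(\delta - \epsilon)s + (1 - 2\delta\epsilon + \epsilon^2)$ and $(s^2 - 1)$ becomes $s^2 + 2\epsilon s + (\epsilon^2 - 1)$, neither in the required form. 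To restore the form, I would add small polynomial corrections to $x(s+\epsilon)$ and $y(s+\epsilon)$ derived from the Bezout identity for the coprime pair $(s^2 - 2\delta s + 1)$ and $(s^2 - 1)$ (coprime when $\delta \neq \pm 1$), tuned to produce exactly $\hat{\delta}$ in the new identity. For $\epsilon$ small and $\hat{\delta}$ close to $\delta - \epsilon$, these corrections stay small and stability of $\hat{x}, \hat{y}, \hat{z}$ is preserved by continuity of roots.

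The main obstacle will be extending this local construction---which naturally yields admissibility only for $\hat{\delta}$ close to $\delta$---to arbitrary $\hat{\delta} < \delta$. Any $\hat{\delta} < 0$ is trivially admissible (take $x = 1, y = 0$, so $z = s^2 - 2\delta s + 1$ is stable), so only $\hat{\delta} \in [0, \delta)$ requires genuine work. The strategy is to iterate the perturbation, producing a decreasing sequence of admissible values, and at any finite limit point invoke the lemma just proved: continuity of leading principal minors of the Hurwitz matrix means limits of normalized stable witnesses satisfy the nonnegativity condition and thus yield quasi-admissibility at the limit, allowing the perturbation to be reapplied and pushing past any target $\hat{\delta}$. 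The delicate technical step is normalizing the witnesses appropriately before passing to a limit, since stable polynomials need not have bounded coefficients.
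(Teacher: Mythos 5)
Your opening reduction is exactly the paper's framing: Theorem~\ref{delta_prop} is deduced there as an immediate corollary of Theorem~\ref{main_thm}, since admissible implies quasi-admissible. But your proposed proof of that stronger statement has two genuine gaps. First, the local step: after the shift $s \mapsto s+\epsilon$, the mismatch you must absorb via the Bezout correction has coefficients of size $\Theta(\epsilon)$, while the stability margin of $x(s+\epsilon)$, $y(s+\epsilon)$, $z(s+\epsilon)$ is only $\epsilon$ (the former boundary roots now sit at real part $-\epsilon$). Continuity of roots does not rescue this: the quasi-stable witnesses of interest (in particular all those produced by algebraic specification) have imaginary-axis roots of multiplicity $m \geq 2$, and a coefficient perturbation of size $\Theta(\epsilon)$ can move such a root cluster by $\Theta(\epsilon^{1/m}) \gg \epsilon$, so the corrected polynomials may fail to be stable no matter how small $\epsilon$ is. (There is also degree bookkeeping: multiplying the error by the Bezout cofactors can raise $\deg \hat{x}$ above $\deg x$ and threaten $\deg \hat{x} \geq \deg \hat{y}$, though that is minor by comparison.) Second, the globalization step leans on the false converse of the paper's Lemma: nonnegativity of the leading principal minors of the Hurwitz matrix does \emph{not} imply quasi-stability --- the paper's own example $s^4 + 198 s^2 + 101^2$ is precisely a counterexample --- so ``limits of stable witnesses have nonnegative minors, hence are quasi-admissible'' is not a valid inference; and the normalization issue you flag yourself (leading coefficients tending to zero, degrees dropping, witnesses blowing up) is left unresolved, so the iterate-and-pass-to-the-limit scheme does not close.

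The paper avoids both problems with a single exact construction: set $R(s) = (s^2-1)y(s)/z(s)$, conjugate by the M\"obius map $f(s) = (1+s)/(1-s)$, dilate by $s \mapsto s/(1+\epsilon)$ inside the disk, and transform back. Because the dilation fixes the disk point corresponding to $s=1$ and moves the points corresponding to $\delta \pm i\sqrt{1-\delta^2}$, the pulled-back rational function again has the exact shape $(s^2-1)y_\epsilon/z_\epsilon$ with $z_\epsilon - (s^2-1)y_\epsilon = (s^2 - 2\delta_\epsilon s + 1)x_\epsilon$: there is no error term to absorb, no perturbation estimate at multiple roots, degrees are preserved, and a single choice of $\epsilon$ realizes every target $\hat{\delta} < \delta$ directly, so no iteration or limiting argument is needed. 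To salvage your route you would need a quantitative perturbation bound adapted to the multiple boundary roots and a correct argument for passing to limits; as written, both the local and the global steps fail.
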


	For $\delta = 1$, note that the Belgian chocolate problem reduces to whether there are $x,y \in H$ with $\deg(x) \geq \deg(y)$ such that $(s-1)^2x + (s^2-1)y \in H$. This cannot occur for non-zero $x,y$ since $(s-1)^2x + (s^2-1)y$ has a root at $s = 1$. Theorem \ref{delta_prop} then implies that any $\delta \geq 1$ is not admissible. In 2012, Bergweiler and Eremenko showed that any admissible $\delta$ must satisfy $\delta < 0.999579$ \cite{bergweiler2013gol}.

	On the other hand, if we fix $x,y$ then there is no single largest admissible $\delta$ associated to $x,y$. Standard results from control theory show that if $\delta$ is admissible by $x, y$ then for $\epsilon$ small enough, $\delta+\epsilon$ is admissible by the same polynomials.

	Therefore, supremum $\delta^*$ over all admissible $\delta$ will not be associated to stable $x,y$. From an optimization point of view, the associated optimization program in (\ref{bcp_opt}) has an open feasible region. In particular, the set of admissible $\delta$ for (\ref{bcp_opt}) is of the form $(0,\delta_n^*)$ for some $\delta_n^*$ that is not admissible by $x,y$ of degree at most $n$. However, as we will later demonstrate, quasi-admissible $\delta$ lie on the boundary of this feasible region. Moreover, quasi-admissible $\delta$ naturally serve as analogues of local maxima. We will therefore find quasi-admissible $\delta$ and use these to find admissible $\delta$. In Section \ref{sec:approx} we will prove the following theorem relating admissible and quasi-admissible $\delta$. The following is the main theorem of our work and demonstrates the utility of searching for quasi-admissible $\delta$.

	\begin{theorem}\label{main_thm}If $\delta$ is quasi-admissible, then all $\hat{\delta} < \delta$ are admissible. Moreover, if $\delta$ is quasi-admissible by quasi-stable $x,y$ of degree at most $n$, then any $\hat{\delta} < \delta$ is admissible by stable $\hat{x}, \hat{y}$ of degree at most $n$. \end{theorem}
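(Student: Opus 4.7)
The plan is to prove the second (stronger) clause directly; Theorem \ref{delta_prop} then drops out as a corollary, since admissibility implies quasi-admissibility. Fix quasi-stable $x, y$ of degree at most $n$ with $z := (s^2 - 2\delta s + 1)x + (s^2 - 1)y \in \ovr{H}$, fix any $\hat\delta < \delta$, and set $\epsilon := \delta - \hat\delta$. The goal is to produce stable $\hat x, \hat y$ of degree at most $n$ for which $(s^2 - 2\hat\delta s + 1)\hat x + (s^2 - 1)\hat y \in H$.

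The core move, already present in the proof of the preceding lemma, is the substitution $s \mapsto s + \epsilon$: it turns any polynomial in $\ovr{H}$ into one in $H$ whose roots have real part at most $-\epsilon$. A direct expansion of $z(s+\epsilon)$ gives
\[
z(s+\epsilon) = (s^2 - 2\hat\delta s + 1)\, x(s+\epsilon) + (s^2 - 1)\, y(s+\epsilon) + E(s),
\]
where $E(s)$ is an explicit polynomial (involving the correction terms $(\epsilon^2 - 2\delta\epsilon)\, x(s+\epsilon)$ and $(2\epsilon s + \epsilon^2)\, y(s+\epsilon)$) with coefficients of size $O(\epsilon)$. Since $s^2 - 2\hat\delta s + 1$ and $s^2 - 1$ are coprime for $\hat\delta \neq \pm 1$, the extended Euclidean algorithm produces $u, v \in \real[s]$ with $\deg u \leq n-1$ and $\deg v \leq 1$, satisfying $(s^2 - 2\hat\delta s + 1)\, u + (s^2 - 1)\, v = E$. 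Setting $\hat x := x(s+\epsilon) + u$ and $\hat y := y(s+\epsilon) + v$ then makes $(s^2 - 2\hat\delta s + 1)\hat x + (s^2 - 1)\hat y$ equal to $z(s+\epsilon) \in H$, reducing the problem to checking that $\hat x, \hat y \in H$.

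The main obstacle is exactly this last verification. The polynomial $x(s+\epsilon)$ has roots at distance at least $\epsilon$ from the imaginary axis, while the correction $u$ has coefficients of size $O(\epsilon)$, so the naive continuity-of-roots bound yields root movement of the same order as the safety margin --- right at the boundary of usefulness. The situation is strictly worse when $x$ or $y$ has an imaginary root of multiplicity $k \geq 2$ (the generic situation for the algebraic-specification witnesses motivating the paper), where a size-$\epsilon$ coefficient perturbation can move roots by $O(\epsilon^{1/k})$. My proposed remedy is to exploit the nonuniqueness in the Bezout decomposition: replacing $(u, v)$ with $(u + t(s^2 - 1),\, v - t(s^2 - 2\hat\delta s + 1))$ for any $t \in \real[s]$ of degree at most $n - 2$ leaves $\hat z$ unchanged but gives an $(n-1)$-parameter family of candidate $\hat x, \hat y$. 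Using the nonnegativity of the leading principal minors of $A(x)$ and $A(y)$ guaranteed by the preceding lemma, together with the sharper Routh-Hurwitz characterization of quasi-stability from \cite{asner1970total}, one selects $t = t(\epsilon)$ so that every leading principal minor of both $A(\hat x)$ and $A(\hat y)$ is strictly positive for all small enough $\epsilon$. This case analysis --- in particular the handling of repeated imaginary roots --- is where the bulk of the technical work of Section \ref{sec:approx} is expected to go.
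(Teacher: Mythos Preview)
Your approach is genuinely different from the paper's, and the gap you yourself flag is real and unfilled. The paper does \emph{not} use the shift $s\mapsto s+\epsilon$ followed by a Bezout correction. Instead it works with the rational function $R(s)=(s^2-1)y(s)/z(s)$, conjugates by the Cayley transform $s\mapsto(1+s)/(1-s)$ to carry the closed right half-plane to the closed unit disk, applies the radial dilation $s\mapsto s/(1+\epsilon)$, and conjugates back. The point is that after dilation the preimages of $0$, $1$, and $\infty$ that sat on the unit circle are pushed to $|s|=1+\epsilon$, hence on returning to the half-plane picture all the offending zeros and poles land strictly in $\{\re(s)<0\}$. One then reads off $\hat y$, $\hat x$, $\hat z$ from the numerator, from (denominator $-$ numerator), and from the denominator of the resulting rational function. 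This handles arbitrary multiplicities uniformly with no case analysis and no appeal to Hurwitz minors; the entire argument is a few lines of bookkeeping about where $R_\epsilon$ takes the values $0,1,\infty$.

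By contrast, your plan stalls precisely where you say it does. After writing $\hat x=x(s+\epsilon)+u$ and $\hat y=y(s+\epsilon)+v$, the stability of $\hat x,\hat y$ is the whole problem, and the proposed fix---exploiting the $(n-1)$-parameter Bezout freedom $t$ to force positivity of all leading principal minors of $A(\hat x)$ and $A(\hat y)$ simultaneously---is asserted, not proved. You correctly note that a size-$\epsilon$ coefficient perturbation of a polynomial with a $k$-fold imaginary root moves roots by $O(\epsilon^{1/k})$, swamping the $\epsilon$-margin from the shift; but you give no argument that a choice of $t$ repairs this for $\hat x$ without simultaneously breaking $\hat y$ (note that adjusting $t$ adds $-t(s^2-2\hat\delta s+1)$ to $\hat y$, generically raising its degree to $n$ and introducing new roots you must also control). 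The sentence ``this case analysis\ldots is where the bulk of the technical work is expected to go'' is an acknowledgment that the proof is not yet a proof. The paper's conformal-dilation argument is the missing idea: it replaces the coefficient-level perturbation (where root sensitivity is bad) with a geometric perturbation of the domain (where it is trivially controlled).
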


	This theorem shows that to find admissible $\delta$, we need only to find quasi-admissible $\delta$. In fact our theorem will show that if $\delta$ is quasi-admissible via $x,y$ of degree at most $n$, then all $\hat{\delta} < \delta$ are admissible via $x,y$ of degree at most $n$ as well. In short, quasi-admissible $\delta$ serve as upper limit points of admissible $\delta$. Also note that since admissible implies quasi-admissible, Theorem \ref{main_thm} implies Theorem \ref{delta_prop}.

	The proof of Theorem \ref{main_thm} will be deferred until Section \ref{sec:approx}. In fact, we will do more than just prove the theorem. We will given an explicit algoritm for approximating quasi-stable $\hat{\delta}$ by stable $\delta$ within any desired tolerance. We will also be able to use the techniques in Section \ref{sec:approx} to prove the following theorem showing that admissible $\delta$ are always smaller than some quasi-admissible $\delta$.

	\begin{theorem}\label{rev_thm}If $\delta$ is admissible by $x,y$ of degree at most $n$ then there is some $\hat{\delta} > \delta$ that is quasi-admissible by $\hat{x}, \hat{y}$ of degree at most $n$. Moreover, this $\hat{\delta}$ is not admissible by these polynomials.\end{theorem}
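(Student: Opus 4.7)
The plan is to hold the admissible pair $(x,y)$ fixed and push $\delta$ upward until stability of $z$ first fails. Define
\[
z_{\delta'}(s) := (s^2 - 2\delta' s + 1)\,x(s) + (s^2 - 1)\,y(s), \qquad S := \{\delta' > 0 : z_{\delta'} \in H\}.
\]
By hypothesis $\delta \in S$. The set $S$ is open in $(0,\infty)$: the Routh--Hurwitz criterion presents it as the locus where the leading principal minors of $A(z_{\delta'})$---each a polynomial in $\delta'$---are simultaneously positive; equivalently, the control-theoretic remark already invoked in the excerpt says that admissibility via $(x,y)$ persists under small perturbations of $\delta$. The set $S$ is also bounded above: every $\delta' \in S$ is admissible via $(x,y)$, so $\delta' < 1$ by the elementary $s=1$ argument recorded earlier in the paper (or by the Bergweiler--Eremenko bound $\delta' < 0.999579$).

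Set $\hat{\delta} := \sup S$. Since $S$ is open and contains $\delta$, an open interval about $\delta$ lies inside $S$, which forces $\hat{\delta} > \delta$. Since $S$ is open and $\hat{\delta}$ is an upper bound for $S$, we cannot have $\hat{\delta} \in S$, so $z_{\hat{\delta}} \notin H$. To verify $z_{\hat{\delta}} \in \overline{H}$, pick a sequence $\delta_k \in S$ with $\delta_k \to \hat{\delta}$. The coefficients of $z_{\delta_k}$ are affine in $\delta_k$ and so converge to those of $z_{\hat{\delta}}$. The leading coefficient of $z_{\delta'}$ is independent of $\delta'$ (it depends only on the leading coefficients of $x$ and $y$), so, assuming it is nonzero, the degree of $z_{\delta_k}$ matches that of $z_{\hat{\delta}}$ and the roots depend continuously on the coefficients. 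Each root of $z_{\hat{\delta}}$ is then a limit of roots of the $z_{\delta_k}$, all of which have strictly negative real part, so it has real part $\leq 0$.

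Now take $\hat{x} := x$ and $\hat{y} := y$. Both are stable, hence quasi-stable, of degree at most $n$, and together with $z_{\hat{\delta}} \in \overline{H}$ they show that $\hat{\delta}$ is quasi-admissible by polynomials of degree at most $n$. Because $z_{\hat{\delta}} \notin H$, the pair $(\hat{x},\hat{y})$ does not witness admissibility at $\hat{\delta}$, yielding the final clause of the theorem.

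The main technical wrinkle I anticipate is the degenerate case $\deg(y) = \deg(x)$ with the leading coefficients of $x$ and $y$ canceling, in which case $\deg(z_{\delta'})$ drops and the new leading coefficient of $z_{\delta'}$ is itself linear in $\delta'$. The supremum construction and its boundedness are unaffected, but the continuity-of-roots step requires choosing $\delta_k \to \hat{\delta}$ that avoids the at-most-one value of $\delta'$ where this new leading coefficient vanishes, so that the degree remains constant along the tail of the sequence.
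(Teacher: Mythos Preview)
Your argument is correct and is genuinely different from the paper's. The paper proves Theorem~\ref{rev_thm} by \emph{reversing} the fractional-linear-transformation machinery used for Theorem~\ref{main_thm}: starting from stable $x,y,z$, one forms $R$, passes to $D$ on the disk, and then dilates outward (the opposite of the $s\mapsto s/(1+\epsilon)$ contraction) until some zero, one-point, or pole of $D$ first reaches the unit circle; transforming back yields quasi-stable $\hat{x},\hat{y},\hat{z}$ with at least one of them non-stable and with a strictly larger associated $\hat{\delta}$. Your route is more elementary: you leave $x,y$ untouched and simply slide $\delta$ upward until $z_{\delta'}$ first loses stability, using only openness of $H$ and continuity of roots. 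This buys you a slightly sharper conclusion---your $\hat{x},\hat{y}$ remain genuinely stable, whereas the paper's reverse procedure may push a root of $x$ or $y$ onto the imaginary axis---and it avoids the conformal bookkeeping entirely. The paper's approach, on the other hand, makes the duality with Theorem~\ref{main_thm} transparent and moves all three polynomials toward the quasi-stable boundary simultaneously, which is closer in spirit to the ``algebraic specification'' limit points the paper is ultimately interested in. One small point worth tightening in your write-up: the bound $\delta' < 1$ for $\delta' \in S$ gives only $\hat{\delta}\le 1$, and your continuity step would fail at $\hat{\delta}=1$ since $z_1(1)=0$; the cleanest fix is to invoke the Bergweiler--Eremenko bound (giving $\hat{\delta}\le 0.999579 < 1$), or to note directly that $z_1$ has a root at $s=1$, so by Hurwitz's theorem $z_{\delta'}$ has a root near $s=1$ for $\delta'$ near $1$, forcing $S$ to be bounded strictly away from $1$.
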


	In other words, for any admissible $\delta$, there is a larger $\hat{\delta}$ that is quasi-admissible but not necessarily admissible. Therefore, we can restrict to looking at polynomials $x,y,z$ with at least one root on the imaginary axis.

\section{Low degree examples}\label{low_degree_ex}

	In this section we demonstrate that in low-degree settings, the supremum of all admissible $\delta$ in (\ref{bcp_opt}) is actually a quasi-admissible $\delta$. By looking at quasi-stable polynomials that are not stable, we can greatly reduce our search space and directly find the supremum of the optimization program in (\ref{bcp_opt}). 
	For small degrees of $x, y$, we will algebraically design quasi-stable polynomials that achieve previously known bounds on the Belgian chocolate problem in these degrees.

	Burke et al. \cite{burke2005analysis} showed that for $x\in H^3, y \in H^0$, any admissible $\delta$ must satisfy $\delta < \sqrt{2+\sqrt{2}}/2$ and for $x \in H^4, y \in H^0$, $\delta$ must satisfy $\delta < \sqrt{10+2\sqrt{5}}/4$. He et al. \cite{guannan2007stabilization} later found $x \in H^4, y \in H^0$ admitting $\delta$ close to this bound.

	In fact, these upper bounds on admissible $\delta$ are actually quasi-admissible $\delta$ that can be obtained in a straightforward manner. For example, suppose we restrict to $x$ of degree 3, $y$ of degree 0. Then for some $A, B, C, k \in \real$, we have
	\begin{gather*}
	x(s) = s^3+ As^2 + Bs + C\\
	y(s) = k\end{gather*}

	Instead of trying to find admissible $\delta$ using this $x$ and $y$, we will try to find quasi-admissible $\delta$. That is, we want $\delta$ such that
	$$z(s) = (s^2-2\delta s + 1)x(s) + (s^2-1)y(s) \in \overline{H}.$$

	In other words, this $z(s)$ can be quasi-stable instead of just stable. Note that $z(s)$ must be of degree 5. We will specify a form for $z(s)$ that ensures it is quasi-stable. Consider the case $z(s) = s^5$. This is clearly quasi-stable as its only roots are at $s = 0$. To ensure that $z(s) = s^5$ and equation (\ref{bcp}) holds, we require
	\begin{gather*}
	(s^2-2\delta s + 1)(s^3+ As^2 + Bs + C)+(s^2-1)k = s^5\end{gather*}

	Equating coefficients gives us the following 5 equations in 5 unknowns.
	\begin{gather*}
	A-2\delta=0\\
	-2A\delta + B + 1=0\\
	A - 2B\delta + C + k=0\\
	B-2C\delta=0\\
	C-k=0\end{gather*}

	In fact, ensuring that we have as many equations as unknowns was part of the motivation for letting $z(s) = s^5$. Solving for $A,B,C,k,\delta$, we find
	\begin{gather*}
	8\delta^4-8\delta^2+1=0\\
	A = 2\delta\\
	B = 4\delta^2-1\\
	C = 4\delta^3-2\delta\\
	k = 4\delta^3-2\delta\end{gather*}

	Taking the largest real root of $8\delta^4-8\delta^2+1$ gives $\delta = \sqrt{2+\sqrt{2}}/2$. Taking $A,B,C,k$ as above yields polynomials $x, y, z$ with real coefficients. One can verify that $x$ is stable (via the Routh-Hurwitz test, for example), while $y$ is degree 0 and therefore stable. Note that since $z(s) = s^5, z$ is only quasi-stable. Therefore, there is $x \in H^3, y \in H^0$ for which $\sqrt{2+\sqrt{2}}/2$ is quasi-admissible. This immediately gives the limiting value for $x \in H^3, y \in H^0$ discovered by Burke et al \cite{burke2005analysis}. Combining this with Theorem \ref{main_thm}, we have shown the following theorem.
	\begin{theorem}For $\deg(x) \leq 3$, $\delta = \frac{\sqrt{2+\sqrt{2}}}{2}$ is quasi-admissible and all $\delta < \frac{\sqrt{2+\sqrt{2}}}{2}$ are admissible.\end{theorem}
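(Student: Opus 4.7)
The plan is essentially to follow the algebraic construction laid out in the preceding discussion and then invoke Theorem \ref{main_thm}. I would parameterize $x(s) = s^3 + As^2 + Bs + C$ and $y(s) = k$, and look for $\delta$ that is quasi-admissible by choosing the simplest possible quasi-stable target for $z$, namely $z(s) = s^5$. Equating coefficients in $(s^2 - 2\delta s + 1)x(s) + (s^2-1)y(s) = s^5$ yields five polynomial equations in the five unknowns $A, B, C, k, \delta$, so we generically expect a finite solution set and, in particular, a specific $\delta$.

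The next step is to solve the resulting triangular system. The first four equations express $A, B, C, k$ as polynomials in $\delta$, and substituting into the remaining one reduces to the single condition $8\delta^4 - 8\delta^2 + 1 = 0$. Its largest real root is $\delta^* = \sqrt{2+\sqrt{2}}/2$, and with this choice the formulas for $A, B, C, k$ produce real values. By construction, the resulting $x, y, z$ lie in $\real[s]$ and satisfy the Belgian chocolate relation (\ref{bcp}) with $z = s^5 \in \overh$, so quasi-admissibility will follow as soon as we confirm that $x$ and $y$ are themselves quasi-stable.

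For $y(s) = k$ this is immediate, since a nonzero constant polynomial has no roots. For $x(s)$, I would apply the Routh--Hurwitz criterion to the cubic with the specific numerical values of $A, B, C$ obtained from $\delta = \delta^*$: it suffices to check that the three leading principal minors of the $3\times 3$ Hurwitz matrix $A(x)$ are strictly positive. This is the only nontrivial computational step, but it is a finite numerical verification using the closed-form values above. Once established, $\delta^*$ is quasi-admissible by $x \in H^3, y \in H^0$, which in particular is quasi-admissible with $\deg(x) \leq 3$.

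Finally, Theorem \ref{main_thm} immediately upgrades this to the admissibility claim: since $\delta^*$ is quasi-admissible by quasi-stable polynomials of degree at most $3$, every $\hat{\delta} < \delta^*$ is admissible by stable polynomials of degree at most $3$. The main potential obstacle is the Routh--Hurwitz verification for $x$, which could in principle fail for this particular root of the quartic; if so, one would retreat and check smaller real roots of $8\delta^4 - 8\delta^2 + 1$, but since the construction was engineered precisely to match the known upper bound of Burke et al.\ \cite{burke2005analysis}, we expect $\delta^*$ itself to pass the test.
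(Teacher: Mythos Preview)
Your proposal is correct and follows essentially the same approach as the paper: parameterize $x$ as a monic cubic and $y$ as a constant, impose $z=s^5$, solve the resulting system to obtain $8\delta^4-8\delta^2+1=0$ with the largest root $\delta^*=\sqrt{2+\sqrt{2}}/2$, verify stability of $x$ via Routh--Hurwitz, and conclude by Theorem~\ref{main_thm}. There is nothing to add.
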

	Next, suppose that $x$ has degree 4 and $y$ has degree 0. For $A, k, \delta \in \real$, define
	\begin{gather*}
	x(s) = (s^2+2\delta s + 1)(s^2+A)\\
	y(s) = k\end{gather*}

	Note that as long as $A \geq 0$, $x$ will be quasi-stable and $y$ will be stable for any $k$. As above, we want quasi-admissible $\delta$. We let $z(s) = s^6$, so that $z(s)$ is quasi-stable. Finding $A, \delta, k$ amounts to solving
	\begin{gather*}
	(s^2-2\delta s + 1)x(s) + (s^2-1)y(s) = z(s)\\
	\Leftrightarrow (s^2-2\delta s + 1)(s^2+2\delta s+1)(s^2+A)+(s^2-1)k = s^6\\
	\Leftrightarrow s^6 + (A - 4\delta^2 + 2)s^4 + (-4A\delta^2 + 2A + k + 1)s^2 + (A-k) = s^6
	\end{gather*}

	Note that the $(s^2+2\delta s + 1)$ term in $x$ is used to ensure that the left-hand side will have zero coefficients in its odd degree terms. Since $(s^2+2\delta s + 1)$ is stable, it does not affect stability of $x$. Equating coefficients and manipulating, we get the following equations.
	\begin{gather*}
	16\delta^4 -20\delta^2+5=0\\
	A -4\delta^2+2=0\\
	k -A=0\end{gather*}

	Taking the largest real root of $16\delta^4 -20\delta^2+5$ gives $\delta = \sqrt{10+2\sqrt{5}}/4$. For this $\delta$ one can easily see that $A = 4\delta^2 - 2 \geq 0$, so $x$ is quasi-stable, as are $y$ and $z$ by design. Once again, we were able to easily achieve the limiting value discovered by Burke et al. \cite{burke2005analysis} discussed in Section \ref{low_degree_ex} by searching for quasi-admissible $\delta$. Combining this with Theorem \ref{main_thm}, we obtain the following theorem.

	\begin{theorem}For $\deg(x) \leq 4$, $\delta  = \frac{\sqrt{10+2\sqrt{5}}}{4}$ is quasi-admissible and all $\delta < \frac{\sqrt{10+2\sqrt{5}}}{4}$ are admissible.\end{theorem}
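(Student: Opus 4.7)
The plan is to formalize the construction sketched just before the theorem statement: take $x(s) = (s^2 + 2\delta s + 1)(s^2 + A)$, $y(s) = k$, and aim for $z(s) = s^6$, then solve for the unknowns $A, k, \delta$ via coefficient matching. First I would reproduce the expansion of $(s^2 - 2\delta s + 1)(s^2 + 2\delta s + 1)(s^2 + A) + (s^2-1)k$, confirming that only even powers of $s$ appear (this is the whole point of inserting the symmetric factor $(s^2 + 2\delta s + 1)$), and equate coefficients of $s^4$, $s^2$, and $s^0$ against $s^6$. This yields $A = 4\delta^2 - 2$, $k = A$, and upon substitution the polynomial equation $16\delta^4 - 20\delta^2 + 5 = 0$.

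Next, viewing the quartic as a quadratic in $\delta^2$, I would solve to get $\delta^2 = (5 \pm \sqrt{5})/8$, and identify the largest real root as $\delta = \sqrt{(5+\sqrt{5})/8} = \sqrt{10 + 2\sqrt{5}}/4$. I then verify quasi-stability of the three polynomials at this value. Trivially, $y(s) = k$ is degree $0$ and hence stable, while $z(s) = s^6$ has its only root at $0$ and is quasi-stable. For $x(s)$, I would argue factor by factor: the quadratic $s^2 + 2\delta s + 1$ has roots $-\delta \pm i\sqrt{1-\delta^2}$ with strictly negative real part (using $\delta^2 = (5+\sqrt{5})/8 < 1$), so this factor is stable, and $s^2 + A$ has roots $\pm i\sqrt{A}$ on the imaginary axis provided $A \geq 0$. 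A direct computation gives $A = 4\delta^2 - 2 = (1+\sqrt{5})/2 > 0$, so $(s^2+A) \in \overh$ and consequently $x \in \overh$. This establishes that $\delta = \sqrt{10+2\sqrt{5}}/4$ is quasi-admissible via $x,y$ of degrees $4$ and $0$.

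The admissibility of every $\hat\delta < \sqrt{10+2\sqrt{5}}/4$ by stable $\hat x, \hat y$ of degree at most $4$ then follows at once from Theorem \ref{main_thm}, which promotes quasi-admissibility at a given maximum degree to admissibility for all smaller parameter values at the same degree. I do not expect a real obstacle in this proof: the coefficient-matching computation is routine (its bookkeeping is the most error-prone piece), and the only subtle point is checking that the numerical positivity conditions $A \geq 0$ and $\delta^2 < 1$ both hold for the particular root selected, which reduces to elementary bounds on $\sqrt{5}$. The heavy lifting—namely the perturbation argument that converts quasi-admissibility into admissibility—has been offloaded entirely to Theorem \ref{main_thm}.
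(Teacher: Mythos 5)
Your proposal is correct and follows essentially the same route as the paper: the same algebraic configuration $x=(s^2+2\delta s+1)(s^2+A)$, $y=k$, $z=s^6$, the same coefficient matching leading to $16\delta^4-20\delta^2+5=0$ with $A=k=4\delta^2-2\geq 0$, and the same appeal to Theorem \ref{main_thm} to convert quasi-admissibility into admissibility of all smaller $\delta$. The extra factor-by-factor stability check you include is a harmless elaboration of what the paper states more briefly.
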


	The examples above demonstrate how, by considering quasi-stable $x,y$ and $z$, we can find quasi-admissible $\delta$ that are limiting values of admissible $\delta$. Moreover, the quasi-stable $\delta$ above were found by solving relatively simple algebraic equations instead of having to perform optimization over the space of stable $x$ and $y$.

\section{Algebraic specification}\label{sec:alg_spec}

	The observations in Section \ref{sec:motivation} and Section \ref{sec:math_back} and the examples in Section \ref{low_degree_ex} suggest the following approach which we refer to as {\it algebraic specification}. This method will be used to find the largest known values of $\delta$ found for any given degree. We wish to construct quasi-stable $x(s), y(s), z(s)$ with repeated roots on the imaginary line satisfying (\ref{bcp}). For example, we may wish to find polynomials of the following form:
	\begin{gather*}
	x(s) = (s^2+2\delta s+1)(s^2+A_1)^4(s^2+A_2)^2(s^2+A_3)^2(s^2+A_4)\\
	y(s) = k(s^2+B_1)^3(s^2+B_2)^2\\
	z(s) = s^{14}(s^2+C_1)^2(s^2+C_2)(s^2+C_3)\end{gather*}

	We refer to such an arrangement of $x,y,z$ as an {\it algebraic configuration}. As long as $\delta > 0$, the parameters $\{A_i\}_{i=1}^4$, $\{B_i\}_{i=1}^2$, and $\{C_i\}_{i=1}^3$ are all nonnegative, and $k$ is real, $x(s), y(s), z(s)$ will be real, quasi-stable polynomials. We then wish to solve
	\begin{equation}\label{alg_eq}
	(s^2-2\delta s+1)x(s)+(s^2-1)y(s)=z(s)\end{equation}

	Recall that the $(s^2+2\delta s+1)$ factor in $x(s)$ is present to ensure that the left-hand side has only even degree terms, as the right-hand side clearly only has even degree terms. Expanding (\ref{alg_eq}) and equating coefficients, we get 11 equations in 11 unknowns. Using PHCPack \cite{verschelde1999algorithm} to solve these equations and selecting the solution with the largest $\delta$ such that the $A_i, B_i, C_i \geq 0$, we get the following solution, rounded to seven decimal places:
	\begin{gather*}
	\delta = 0.9808348\\
	A_1 = 1.1856917\\
	A_2 = 6.6228807\\
	A_3 = 0.3090555\\
	A_4 = 0.2292503\\
	B_1 = 0.5430391\\
	B_2 = 0.2458118\\
	C_1 = 4.4038385\\
	C_2 = 0.7163490\\
	C_3 = 7.4637156\\
	k = 196.1845537
	\end{gather*}

	The actual solution has $\delta = 0.980834821202\ldots$. This is the largest $\delta$ we have found to date using this method. By Theorem \ref{main_thm}, we conclude the following theorem.

	\begin{theorem}All $\delta \leq 0.9808348$ are admissible.\end{theorem}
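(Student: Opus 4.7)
The plan is to invoke Theorem \ref{main_thm} after certifying that the algebraic configuration displayed in Section \ref{sec:alg_spec} genuinely produces a quasi-admissible $\delta$ strictly greater than $0.9808348$. In other words, the theorem will follow if we show that the parameters $\delta$, $\{A_i\}$, $\{B_i\}$, $\{C_i\}$, $k$ returned by PHCPack yield real quasi-stable polynomials $x(s), y(s), z(s)$ that satisfy equation (\ref{alg_eq}), and that the associated value of $\delta$ exceeds $0.9808348$.

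First I would examine quasi-stability of each of $x,y,z$ factor by factor. The factor $s^2 + 2\delta s + 1$ appearing in $x(s)$ has roots $-\delta \pm i\sqrt{1-\delta^2}$; since $0 < \delta < 1$, these lie strictly in the left half plane, so this factor is stable. Each factor of the form $(s^2 + A_i)$, $(s^2 + B_i)$, $(s^2 + C_i)$ has purely imaginary roots as soon as $A_i, B_i, C_i \geq 0$, hence is quasi-stable. Finally, the factor $s^{14}$ in $z(s)$ contributes the root $s=0$, which has zero real part. Thus, provided the nonnegativity conditions on the $A_i$, $B_i$, $C_i$ hold, each of $x,y,z$ is a product of stable or quasi-stable factors, and is therefore quasi-stable. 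A direct inspection of the listed numerical values shows that all of $A_1,\dots,A_4$, $B_1,B_2$, and $C_1,C_2,C_3$ are positive to seven decimal places, so the nonnegativity conditions are satisfied.

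Next I would verify that (\ref{alg_eq}) holds at the reported parameters. By construction of the algebraic configuration, both sides of (\ref{alg_eq}) contain only even powers of $s$, and matching the $11$ even-degree coefficients produces an explicit square polynomial system in the $11$ unknowns $\delta, A_1,\dots,A_4, B_1, B_2, C_1, C_2, C_3, k$. Since these unknowns were obtained by PHCPack as a numerical solution of precisely this polynomial system, the identity (\ref{alg_eq}) holds at the true algebraic root whose seven-digit approximation is displayed. This root is real and, as noted in the excerpt, its $\delta$-coordinate equals $0.980834821202\ldots > 0.9808348$. Consequently $\delta = 0.980834821202\ldots$ is quasi-admissible via $x, y \in \overh$ with $\deg(x) \geq \deg(y)$.

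Finally, applying Theorem \ref{main_thm} to this quasi-admissible value shows that every $\hat{\delta}$ strictly less than $0.980834821202\ldots$ is admissible. In particular all $\delta \leq 0.9808348$ are admissible, which is the desired conclusion. The main obstacle here is not conceptual but computational, namely certifying that the PHCPack output corresponds to a true solution of the polynomial system with the stated $\delta$ strictly exceeding the rounded value $0.9808348$; this can be handled either by interval-arithmetic certification (for example via a Krawczyk or $\alpha$-theory test) applied to the square system of $11$ equations in $11$ unknowns, or by observing that $0.9808348$ is chosen with a safety margin well beyond the numerical precision of the solver.
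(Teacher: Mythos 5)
Your proposal is correct and follows essentially the same route as the paper: certify that the displayed configuration gives quasi-stable $x,y,z$ satisfying (\ref{alg_eq}) with $\delta = 0.980834821202\ldots$, hence a quasi-admissible $\delta$ exceeding $0.9808348$, and then invoke Theorem \ref{main_thm}. Your added remarks on certifying the PHCPack root (interval arithmetic or $\alpha$-theory) go slightly beyond what the paper spells out, but they do not change the argument.
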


	In general, we can form an algebraic configuration for $x(s), y(s), z(s)$ as
	\begin{equation}\label{xconf}
	x(s) = (s^2+2\delta s +1) \prod_{i=1}^{m_1} (s^2+A_i)^{j_i}.\end{equation}
	\begin{equation}\label{yconf}
	y(s) = k\prod_{i=1}^{m_2}(s^2+B_i)^{k_i}.\end{equation}
	\begin{equation}\label{zconf}
	z(s) = s^c\prod_{i=1}^{m_3}(s^2+C_i)^{\ell_i}.\end{equation}

	For fixed degrees of $x, y$, note there are only finitely many such configurations. Instead of performing optimization over the non-convex feasible region of the Belgian chocolate problem, we instead tackle the combinatorial optimization problem of maximizing $\delta$ among the possible configurations. 

	Note that $c$ in (\ref{zconf}) is whatever exponent is needed to make $\deg(z) = \deg(x)+2$. We want $x,y,z$ to satisfy (\ref{bcp}). Expanding and equating coefficients, we get equations in the undetermined variables above. As long as the number of unknown variables equals the number of equations, we can solve and look for real solutions with $\delta$ and all $A_i, B_i, C_i$ nonnegative.

	Not all quasi-stable polynomials can be formed via algebraic specification. In particular, algebraic specification forces all the roots of $y,z$ and all but two of the roots of $x$ to lie on the imaginary axis. However, more general quasi-stable $x,y,z$ could have some roots with negative real part and some with zero real part. This makes the possible search space infinite and, as discussed in Section \ref{low_degree_ex}, empirically does not result in larger $\delta$. Further evidence for this statement will be given in Section \ref{sec:opt}.

	While the method of algebraic specification has demonstrable effectiveness, it becomes computationally infeasible to solve these general equations for very large $n$. In particular, the space of possible algebraic configurations of $x,y,z$ grows almost exponentially with the degree of the polynomials. For large $n$, an exhaustive search over the space of possible configurations becomes infeasible, especially as the equations become more difficult to solve.

	We will describe an algebraic configuration via the shorthand 
	\begin{equation}\label{alg_conf_shorthand}
	[j_1,\ldots,j_{m_1}],[k_1,\ldots, k_{m_2}],[\ell_1,\ldots, \ell_{m_3}].\end{equation}
	This represents the configuration described in $(\ref{xconf}),(\ref{yconf}),(\ref{zconf})$ above. In particular, if the second term of (\ref{alg_conf_shorthand}) is empty then $y = k$, while if the third term of (\ref{alg_conf_shorthand}) is empty then $z$ is a power of $s$. For example, the following configuration is given by $[3,1],[2],[1]$:
	\begin{gather*}
	x(s) = (s^2+2\delta s + 1)(s^2+A_1)^3(s^2+A_2)\\
	y(s) = k(s^2+B_1)^2\\
	z(s) = s^{10}(s^2+C_1)\end{gather*}

	A table containing the largest quasi-admissible $\delta$ we have found and their associated algebraic configuration for given degrees of $x$ is given below. Note that for each entry of the table, given $\deg(x) = n$ and quasi-admissible $\delta$, Theorem \ref{main_thm} implies that all $\hat{\delta} < \delta$ are admissible with $x,y$ of degree at most $n$.

	\begin{figure}[H]
		\centering
		{
		\begin{tabular}{ |c|c|c| } 
		\hline
		$\deg(x)$ & Configuration & $\delta$\\
		\hline
		4 & [1],[],[] & 0.9510565 \\
		\hline
		6 & [2],[1],[] & 0.9629740 \\
		\hline
		8 & [3],[1],[1] & 0.9702883 \\
		\hline
		10 & [3,1],[2],[1] & 0.9744993\\
		\hline
		12 & [3,2],[2,1],[1] & 0.9764615 \\
		\hline
		14 & [3,2,1],[2,1],[2] & 0.9783838 \\
		\hline
		16 & [3,2,1,1],[2,2],[2] & 0.9794385\\
		\hline
		18 & [3,2,2,1],[2,2],[2,1] & 0.9802345 \\
		\hline
		20 & [4,2,2,1],[3,2],[2,2,1] & 0.9808348 \\
		\hline
		\end{tabular}
		}
		\caption{The largest known quasi-admissible $\delta$ for $x,y,z$ designed algebraically, for varying degrees of $x$.}
	\end{figure}

\section{Approximating quasi-admissible $\delta$ by admissible $\delta$}\label{sec:approx}

	In this section we will prove Theorem \ref{main_thm}. Our proof will be algorithmic in nature. We will describe an algorithm that, given $\delta$ that is quasi-admissible by quasi-stable polynomials $x, y$, will produce for any $\hat{\delta} < \delta$ stable polynomials $\hat{x}, \hat{y}$ admitting $\hat{\delta}$. Moreover, given $\deg(x) = n$, we will ensure that $\deg(\hat{x}) \leq n$.

	\begin{proof}[of Theorem \ref{main_thm}]Suppose that for a given $\delta$ there are $x,y,z \in \ovr{H}$ with $\deg(x) \geq \deg(y)$ satisfying (\ref{bcp}). Let $n = \deg(x)$. Define
	$$R(s) := \dfrac{(s^2-1)y(s)}{z(s)}.$$
	
	Note that for any $s \in \cmp$, $R(s) = 0$ iff $(s^2-1)y(s) = 0$, $R(s) = 1$ iff $(s^2-2\delta s+1)x(s) = 0$, and $R(s)$ is infinite iff $z(s) = 0$. Since $x,y,z$ are quasi-stable, we know that for $\re(s) > 0$, $R(s) = 1$ iff $s = \delta \pm i\sqrt{1-\delta^2}$ and $R(s) = 0$ iff $s = 1$. All other points where $R(s)$ is 0, 1, or infinite satisfy $\re(s) \leq 0$.
	Precomposing $R(s)$ with the fractional linear transformation $f(s) = (1+s)/(1-s)$, we get the complex function
	$$D(s) := R\bigg(\dfrac{1+s}{1-s}\bigg).$$

	Note that this fractional linear transformation maps the unit disk $\{s | |s| = 1\}$ to the imaginary axis $\{ s | \re(s) = 0\}$. Also note that $f^{-1}(1) = 0, f^{-1}(\delta \pm i\sqrt{1-\delta^2}) = \pm it$ where $t = \sqrt{1-\delta}/\sqrt{1+\delta}$. Therefore, $D(s)$ satisfies the following properties:
	\begin{enumerate}
		\item For $|s| < 1$, $D(s) = 0$ iff $s = 0$.
		\item For $|s| < 1$, $D(s) = 1$ iff $s = \pm it$.
		\item $|D(s)| < \infty$ for $|s| < 1$.
	\end{enumerate}

	Note that the last holds by the quasi-stability of $z(s)$. Since $z(s) = 0$ implies $\re(s) \leq 0$, $D(s) = \infty$ implies $|s| \geq 1$. In particular, the roots of $x, y, z$ that have 0 real part now correspond to points $|s| = 1$ such that $D(s) = 1, 0, \infty$ respectively. For any $\epsilon > 0$, let
	$$D_\epsilon(s) := D\bigg(\frac{s}{1+\epsilon}\bigg).$$
	$D_\epsilon(s)$ then satisfies
	\begin{enumerate}
		\item For $|s| \leq 1$, $D_\epsilon(s) = 0$ iff $s = 0$.
		\item For $|s| \leq 1$, $D_\epsilon(s) = 1$ iff $s = \pm i(1+\epsilon)t$.
		\item $|D(s)| < \infty$ for $|s| \leq 1$.
	\end{enumerate}
	Precomposing with the inverse fractional linear transformation $f^{-1}(s) = (s-1)/(s+1)$, we get
	$$R_\epsilon(s) := D_\epsilon\bigg(\dfrac{s-1}{s+1}\bigg).$$
	By the properties of $D_\epsilon(s)$ above, we find that $R_\epsilon(s)$ satisfies
	\begin{enumerate}
		\item For $\re(s) \geq 0$, $R_\epsilon(s) = 0$ iff $s = 1$.
		\item For $\re(s) \geq 0$, $R_\epsilon(s) = 1$ iff $s = \delta_\epsilon\pm i\sqrt{1-\delta_\epsilon^2}$ where
		$$\delta_\epsilon = \dfrac{1-(1+\epsilon)^2t^2}{1+(1+\epsilon^2)t^2}.$$
		\item For $\re(s) \geq 0$, $|R_\epsilon(s)| < \infty$.
	\end{enumerate}

	Moreover, $R_\epsilon(s) \neq 0, 1, \infty$ for any $s$ such that $\re(s) < 0$. We can rewrite $R_\epsilon(s)$ as $R_\epsilon(s) = p(s)/q(s)$.
	Note that by the first property of $R_\epsilon$, the only root of $p(s)$ in $\{s | \re(s) \geq 0\}$ is at $s = 1$. By properties of $f(s), f^{-1}(s)$, one can show that $p(-1) = 0$. This follows from the fact that $R(-1) = 0$, which implies that $\lim_{s\to \infty} D(s) = \lim_{s\to\infty}D_{\epsilon}(s) = 0$, and therefore $R_\epsilon(-1)  = 0$. Therefore, $p(s) = (s^2-1)y_\epsilon(s)$ where $y_\epsilon(s)$ has no roots in $\{s | \re(s) \geq 0\}$.
	By the second property of $R_\epsilon$, the only roots of $q-p$ in $\{s | \re(s) \geq 0\}$ are at $\pm \delta_\epsilon + i\sqrt{1-\delta_\epsilon^2}$. Therefore, $q-p = (s^2-2\delta_\epsilon s+1)x_\epsilon(s)$ where $x_\epsilon(s)$ has no roots in $\{s | \re(s) \geq 0\}$.
	Finally, by the third property of $R_\epsilon$ we find that $z_\epsilon(s) = (s^2-2\delta_\epsilon s+1)x_\epsilon(s)+(s^2-1)y_\epsilon(s)$ is stable. Moreover, basic properties of fractional linear transformations show that if $\deg (x) = n \geq \deg(y) = m$, then $x_\epsilon, y_\epsilon$ are both of degree $n$. Therefore, $x_\epsilon, y_\epsilon, z_\epsilon$ are stable polynomials satisfying (\ref{bcp}) for $\delta_\epsilon$. For any $\hat{\delta} < \delta$, we can take $\epsilon$ such that $\delta_\epsilon = \hat{\delta}$, proving the desired result.\end{proof}

	Note that if we start with $\delta$ admissible by stable $x,y,z$ of degree at most $n$, then we can do the reverse of this procedure to perturb $x,y,z$ to quasi-stable $\hat{x}, \hat{y}, \hat{z}$. By the reverse of the arguments above, $\hat{x}, \hat{y}, \hat{z}$ will be quasi-stable but at least one of these polynomials will not be stable. These polynomials will be associated to some quasi-admissible $\hat{\delta} > \delta$. This gives the proof of Theorem \ref{rev_thm}.

	The proof above describes the following algorithm for perturbing quasi-stable $x,y,z$ satisfying (\ref{bcp}) to obtain stable $\hat{x}, \hat{y},\hat{z}$ satisfying (\ref{bcp}).\\
	\\
	\noindent{\bf Input:} Real numbers $\delta, \epsilon > 0$ and real polynomials $x,y,z \in \ovr{H}$ satisfying (\ref{bcp}).\\
	\noindent{\bf Output:} $\hat{\delta}$ and real polynomials $\hat{x},\hat{y},\hat{z} \in H$ satisfying (\ref{bcp}).
	\begin{enumerate}
		\item Let $R(s) = (s^2-1)y(s)/z(s)$. For $\epsilon > 0$, compute
		$$R_\epsilon(s) = R\bigg(\dfrac{(2+\epsilon)s + \epsilon}{\epsilon s + (2+\epsilon)} \bigg).$$
		\item Reduce $R_\epsilon(s)$ to lowest terms. Suppose that in lowest terms $R_\epsilon(s) = p(s)/q(s)$.
		\item Factor $p(s)$ as $(s^2-1)\hat{y}(s)$ and factor $q(s)-p(s)$ as $(s^2-2\hat{\delta}s+1)\hat{x}(s)$. Let $\hat{z}(s) = q(s)$.
	\end{enumerate}

	\medskip

	To further illustrate the method of algebraic specification and this algorithm for perturbing to get quasi-stable polynomials, we give the following detailed example.

	\begin{example}Say we are interested in $x$ of degree 4. We may then give the following algebraic specification of $x, y, z$ discussed in Section \ref{low_degree_ex}. In the shorthand of (\ref{alg_conf_shorthand}), this is the configuration $[1],[],[]$.
	\begin{gather*}
	x(s) = (s^2+2\delta s+1)(s^2+A)\\
	y(s) = k\\
	z(s) = s^6\end{gather*}
	As in Section \ref{low_degree_ex}, we solve $(s^2-2\delta s+1)x(s) + (s^2-1)y(s) = z(s)$. This implies that $\delta, A, k$ satisfy $16\delta^4-20\delta^2+5 = 0$, $A = 4\delta^2-2$, $k = 4\delta^2 - 2$. Taking the largest root of $16\delta^4-20\delta^2+5$ gives $\delta = \sqrt{10+2\sqrt{5}}/4$, $A = k = (\sqrt{5}+1)/2$. Given numerically to six decimal places, $\delta = 0.951057$. Computing $R(s)$ using exact arithmetic, we get
	\begin{gather*}
	R(s) = \dfrac{(s^2-1)y(s)}{z(s)} = \dfrac{(s^2-1)(\sqrt{5}+1)}{2s^6}\end{gather*}
	We then use a fractional linear transformation $s \mapsto (1+s)/(1-s)$ to get:
	\begin{align*}
	D(s) &= R((1+s)/(1-s))\\
	&= \dfrac{2s(\sqrt{5}+1)(s-1)^4}{s^6+6s^5+15s^4+20s^3+15s^2+6s+1}\end{align*}

	One can verify that $D(s)$ can equal 1 on the boundary of the unit circle, so we push these away from the boundary (with $\epsilon = 0.01$) by defining
	\begin{align*}
	D_\epsilon(s) &=D\big(\frac{s}{1+0.01}\big)\\
	&=\dfrac{6.40805(0.99010s-1)^4s}{0.942045s^6+\ldots+5.94054s}\end{align*}

	While we gave an approximate decimal form above for brevity, this computation can and should be done with exact arithmetic. We let $R_\epsilon(s) = f_\epsilon((s-1)/(s+1))$. Writing $R_\epsilon(s)$ as $p(s)/q(s)$ in lowest terms, we get:
	\begin{gather*}
	p(s) = 64080.55401(0.990990s+199.00990)^4(s^2-1)\\
	q(s) = 0.62122\times 10^{14}s^6 + \ldots +0.94204\end{gather*}

	As proved above, $p(s)$ will equal $(s^2-1)\hat{y}(s)$. Dividing $p(s)$ by the $s^2-1$ factor, we get a polynomial $\hat{y}(s)$ such that its only root is at $s = -201$. Therefore $\hat{y}(s)$ is stable. The denominator, $\hat{z}(s)$ is easily verified to only have roots with negative part. Finally, the polynomial $q(s) - p(s)$ will equal $(s^2-2\hat{\delta}s+1)\hat{x}(s)$. Finding its roots, one can show that $q(s)-p(s)$ only has roots with negative real part, except for roots at $s = 0.950097 \pm 0.311954i$. These roots are of the form $\hat{\delta} \pm \sqrt{\hat{\delta}^2-1}$ for $\hat{\delta} = 0.950097$. Therefore $\hat{\delta} = 0.950097$ is admissible via the stable polynomials $\hat{x},\hat{y},\hat{z}$. While we have decreased $\delta$ slightly, we have achieved stability in the process. By decreasing $\epsilon$, we can get arbitrarily close to our original $\delta$.
	\end{example}

\section{Optimality of algebraic specification}\label{sec:opt}

	Not only does our method of algebraic specification find larger $\delta$ than have been found before, one can view previous approaches to the Belgian chocolate problem as approximating algebraic specification. In particular, previously discovered admissible $\delta$ can be seen as approximating some quasi-admissible $\delta'$ that can be found via algebraic specification.

	For example, in \cite{chang2007global}, Chang and Sahinidis found that $\delta = 0.9739744$ is admissible by
	\begin{align*}
	x(s) &=s^{10} + 1.97351109136261s^9\\
	&+5.49402092964662s^8 + 8.78344232801755s^7\\
	&+ 11.67256448604672s^6 + 13.95449016040116s^5\\
	&+11.89912895529042s^4 + 9.19112429409894s^3\\
	&+5.75248874640322s^2+2.03055901420484s\\
	&+1.03326203778346,\\
	y(s)&=0.00066128189295s^5+3.611364710425s^4\\
	&+0.03394722108511s^3+3.86358782861648s^2\\
	&+0.0178174691792s+1.03326203778319.\\
	\end{align*}

	The roots of $x,y,z$ were discussed in Section \ref{motivation}. As previously noted, $x,y,z$ are close to polynomials with repeated roots on the imaginary axis. Examining the roots of $x,y,z$, one can see that $x,y,z$ are tending towards quasi-stable polynomials $x', y', z'$ that have the same root structure as the algebraic configuration $[3,1],[2],[1]$. In other words, we will consider the following quasi-stable polynomials:

	\begin{gather*}
	x'(s) = (s^2+2\delta' s + 1)(s^2+A_1)^3(s^2+A_2)\\
	y'(s) = k(s^2+B)^2\\
	z'(s) = s^{10}(s^2+C)
	\end{gather*}

	Solving for the free parameters and finding the largest real $\delta'$ such that $A_1, A_2, B, C \geq 0$, we obtain the following values, given to seven decimal places.
	\begin{gather*}
	\delta' = 0.9744993\\
	A_1 =  1.3010813\\
	A_2 =  0.4475424\\
	B =    0.5345301\\
	C =    2.5521908\\
	k =    3.4498736.\end{gather*}

	One can easily verify that taking these values of the parameters, the roots of $x, y, z$ are close to the roots of $x',y',z'$. These algebraically designed $x', y', z'$ possess the root structure that $x,y,z$ are tending towards. Moreover, the $x', y', z'$ show that $\delta'$ is quasi-stable and their associated $\delta'$ gives an upper bound for the $\delta$ found by Chang and Sahinidis. This demonstrates that the stable polynomials found by Chang and Sahinidis are tending towards the quasi-stable ones listed above. Moreover, by Theorem \ref{main_thm} all $\delta < 0.9744993$ are admissible.

	In fact, many examples of admissible $\delta$ given in previous work are approximating quasi-admissible $\delta$ found via algebraic specification. This includes the previously mentioned examples in \cite{burke2005analysis} and all admissible values of $\delta$ given by Chang and Sahinidis in \cite{chang2007global}. We further conjecture that for all admissible $\delta$, there is a quasi-admissible $\delta' > \delta$ that can be achieved by algebraically specified $x,y,z$.

	More formally, if we fix $x, y$ to be of degree at most $n$, let $\delta_n^*$ denote the supremum of the optimization problem in (\ref{bcp_opt}). Note that as discussed in Section \ref{delta_theory}, $\delta_n^*$ is not admissible by $x,y$ of degree at most $n$. The empirical evidence given in this section and in Sections \ref{sec:motivation} and \ref{low_degree_ex} suggests that this $\delta_n^*$ is quasi-admissible and can be obtained through algebraic specification. This leads to the following conjecture.

	\begin{conjecture}For all $n$, $\delta_n^*$ is quasi-admissible by some $x,y,z$ that are formed via algebraic specification.\end{conjecture}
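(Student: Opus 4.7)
The plan is to combine a compactness argument for existence of an optimiser, a first-order optimality analysis that pushes all ``interesting'' roots onto the imaginary axis, and a dimension count to force the multiplicity pattern that appears in an algebraic configuration.

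First I would show that $\delta_n^*$ is itself quasi-admissible by some $(x^*, y^*, z^*)$ of degree at most $n$. Take admissible $\delta_k \uparrow \delta_n^*$ with stable witnesses $x_k, y_k, z_k$; since (\ref{bcp}) is homogeneous in $(x, y, z)$ we may normalise the joint coefficient vector of $(x_k, y_k)$ to lie on the unit sphere in $\real^{2n+2}$. By compactness, pass to a subsequence converging to $(x^*, y^*, z^*, \delta_n^*)$, and observe that $\overline{H^n}$ is closed in $\real^{n+1}$, so the limit is quasi-stable. Combined with the openness of admissibility noted in Section \ref{delta_theory}, this also shows $\delta_n^*$ is not admissible, so the optimum is attained on $\overline{H^n}\setminus H^n$.

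Second, I would show that at the optimum every root of $y^*$ and $z^*$, and every root of $x^*$ other than the pair $-\delta_n^* \pm i\sqrt{1-(\delta_n^*)^2}$ coming from the stabilised companion factor $(s^2+2\delta s + 1)$, lies on the imaginary axis. Setting up the problem as a constrained maximisation of $\delta$ over the semialgebraic set $\{(\delta, x, y) : x, y, z \in \overline{H^n}\}$, a Lagrange-multiplier argument on each stratum (indexed by the positions and multiplicities of the roots with zero real part) shows that a slack root — one in the open left half plane — provides a free direction in parameter space. Using the conformal rescaling $f(s) = (1+s)/(1-s)$ already exploited in the proof of Theorem \ref{main_thm}, one can verify that this free direction strictly increases $\delta$, contradicting optimality. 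This forces all roots to be on the axis except for those two fixed roots of $x^*$.

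Third, with all roots of interest on the imaginary axis, the optimum admits the form (\ref{xconf})--(\ref{zconf}) once we know the multiplicity pattern. Equating coefficients in (\ref{bcp}) gives $n+3$ real equations, whereas the free parameters are $\delta$, the constant $k$, and one real parameter $A_i$, $B_j$, or $C_\ell$ for each distinct imaginary-axis root pair of $x$, $y$, or $z$. For $\delta_n^*$ to be an isolated critical value, the counts must balance; this balance is exactly the constraint encoded by the shorthand (\ref{alg_conf_shorthand}), and collapses distinct roots into repeated ones with the multiplicities $j_i, k_i, \ell_i$ of some algebraic configuration.

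The main obstacle is the second step. The feasible region is a semialgebraic set whose boundary is highly stratified, with non-smooth behaviour precisely at points where roots coalesce — which is where we expect the optimum to live. A rigorous version of the Lagrangian argument must handle these non-smooth points, ruling out exotic optima in which a single off-axis root is held in delicate balance by a lower-stratum configuration. Tools from real algebraic geometry, such as a Puiseux expansion of the root loci of the coefficient map $(x, y, \delta) \mapsto z$ near coalescing roots, or an analysis along the lines of \cite{bergweiler2013gol}, appear to be the right machinery to push the argument through.
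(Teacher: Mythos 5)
The statement you are trying to prove is stated in the paper as an open conjecture; the authors offer only empirical evidence (the low-degree cases of Section~\ref{low_degree_ex}, the convergence of the Chang--Sahinidis polynomials toward the configuration $[3,1],[2],[1]$ in Section~\ref{sec:opt}), not a proof. Your proposal is a reasonable programme, but it does not close the gap, and you essentially concede this yourself: the entire content of the conjecture lives in your second step, and that step is asserted, not proved. The claim that a root of $x^*$, $y^*$, or $z^*$ in the open left half-plane always yields a perturbation direction that strictly increases $\delta$ is exactly what is unknown. The feasible set is a stratified semialgebraic set, and nothing rules out an optimum at a non-smooth boundary point where an off-axis root is pinned by the constraint structure of a lower stratum; a Lagrange-multiplier argument ``on each stratum'' needs a concrete construction of the increasing direction and a proof that it preserves quasi-stability of all three polynomials simultaneously, which the conformal map $f(s)=(1+s)/(1-s)$ from the proof of Theorem~\ref{main_thm} does not by itself supply --- that map is used in the paper to trade $\delta$ \emph{downward} for stability, and reversing it (as in Theorem~\ref{rev_thm}) only produces \emph{some} quasi-admissible $\hat\delta>\delta$, with no control on whether the resulting witnesses have all relevant roots on the axis.

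There are also two secondary gaps. In your first (compactness) step, normalising $(x_k,y_k)$ to the unit sphere does not prevent degeneration in the limit: the leading coefficients may vanish (so $\deg$ drops and the constraint $\deg(x)\geq\deg(y)$ can fail), $x^*$ or $y^*$ may be identically zero, and $z^*=(s^2-2\delta_n^*s+1)x^*+(s^2-1)y^*$ may vanish identically, in which case quasi-admissibility of $\delta_n^*$ by $(x^*,y^*)$ is not established. (Indeed the paper is careful only to claim that $\delta_n^*$ is \emph{not admissible}, and conjectures rather than asserts that it is quasi-admissible.) In your third step, the equation-versus-unknown count is heuristic: having as many equations as unknowns is how the paper \emph{constructs} candidate configurations, but it is neither necessary nor sufficient for the optimiser to have repeated roots with the multiplicities $j_i,k_i,\ell_i$ of some configuration, nor does it force $x^*$ to contain the exact factor $(s^2+2\delta_n^*s+1)$ demanded by (\ref{xconf}); that factor is motivated in Section~\ref{sec:motivation} by parity of the coefficients of $z$, and upgrading that motivation to a statement about the optimiser requires an argument you have not given. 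As it stands, your proposal is a research plan for attacking the conjecture, not a proof of it.
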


\section{Conclusion}

	The Belgian chocolate problem has remained resilient to direct global optimization techniques for over a decade. Most prior work attempts to maximize $\delta$ subject to the stability constraints by applying iterative methods to complicated non-convex regions. By contrast, we find the largest known value of $\delta$ in a more direct fashion. We do this by reducing our problem to combinatorial optimization over a finite set of algebraically constructed limit points. Our key algebraic insight is that quasi-admissible $\delta$ are limiting values of the admissible $\delta$. In fact, previous methods actually find admissible $\delta$ that approach quasi-admissible $\delta$. We give the method of algebraic specification to design quasi-stable polynomials and directly find these quasi-admissible $\delta$ by solving a system of equations. We then show that we can perturb these quasi-stable polynomials to obtain stable polynomials with admissible $\delta$ that are arbitrarily close to the quasi-admissible $\delta$. We show that this method recovers the largest admissible $\delta$ known to date and gives a much better understanding of the underlying landscape of admissible and quasi-admissible $\delta$. We conjecture that for all $n$, the supremum of all $\delta$ admissible by $x,y$ of degree at most $n$ is a quasi-admissible $\delta$ that can be found through our method of algebraic specification.

\section*{Acknowledgments}

	The authors would like to thank Bob Barmish for his valuable feedback, discussions, and advice. The first author was partially supported by the National Science Foundation grant DMS-1502553. The second author was partially supported by the Simons Foundation grant MSN179747.

\bibliographystyle{spmpsci}

\bibliography{refs}

\end{document}